\documentclass[twoside,leqno,10pt]{amsart}
\usepackage{amsfonts}
\usepackage{amsmath}
\usepackage{amscd}
\usepackage{amssymb}
\usepackage{amsthm}
\usepackage{amsrefs}
\usepackage{latexsym}
\usepackage{bbm}
\setlength{\textwidth}{16.5cm}
\setlength{\oddsidemargin}{0cm}
\setlength{\evensidemargin}{0cm}
\setlength{\topmargin}{0cm}
\setlength{\headheight}{0cm}
\setlength{\headsep}{0.5cm}
\setlength{\topskip}{0cm}
\setlength{\textheight}{22.5cm}
\setlength{\footskip}{.5cm}

\begin{document}

\newtheorem{theorem}[subsection]{Theorem}
\newtheorem{proposition}[subsection]{Proposition}
\newtheorem{lemma}[subsection]{Lemma}
\newtheorem{corollary}[subsection]{Corollary}
\newtheorem{conjecture}[subsection]{Conjecture}
\newtheorem{prop}[subsection]{Proposition}
\numberwithin{equation}{section}
\newcommand{\mr}{\ensuremath{\mathbb R}}
\newcommand{\dif}{\mathrm{d}}
\newcommand{\intz}{\mathbb{Z}}
\newcommand{\ratq}{\mathbb{Q}}
\newcommand{\natn}{\mathbb{N}}
\newcommand{\comc}{\mathbb{C}}
\newcommand{\rear}{\mathbb{R}}
\newcommand{\prip}{\mathbb{P}}
\newcommand{\uph}{\mathbb{H}}
\newcommand{\fief}{\mathbb{F}}
\newcommand{\majorarc}{\mathfrak{M}}
\newcommand{\minorarc}{\mathfrak{m}}
\newcommand{\sings}{\mathfrak{S}}
\newcommand{\fA}{\ensuremath{\mathfrak A}}
\newcommand{\mn}{\ensuremath{\mathbb N}}
\newcommand{\mq}{\ensuremath{\mathbb Q}}
\newcommand{\mc}{\ensuremath{\mathbb C}}
\newcommand{\half}{\tfrac{1}{2}}
\newcommand{\f}{f\times \chi}
\newcommand{\summ}{\mathop{{\sum}^{\star}}}
\newcommand{\chiq}{\chi \bmod q}
\newcommand{\chidb}{\chi \bmod db}
\newcommand{\chid}{\chi \bmod d}
\newcommand{\sym}{\text{sym}^2}
\newcommand{\hhalf}{\tfrac{1}{2}}
\newcommand{\sumstar}{\sideset{}{^*}\sum}
\newcommand{\sumprime}{\sideset{}{'}\sum}
\newcommand{\sumprimeprime}{\sideset{}{''}\sum}
\newcommand{\shortmod}{\ensuremath{\negthickspace \negthickspace \negthickspace \pmod}}
\newcommand{\V}{V\left(\frac{nm}{q^2}\right)}
\newcommand{\sumi}{\mathop{{\sum}^{\dagger}}}
\newcommand{\mz}{\ensuremath{\mathbb Z}}
\newcommand{\leg}[2]{\left(\frac{#1}{#2}\right)}
\newcommand{\muK}{\mu_{\omega}}

\title[Moments and one level density of unitary families of {H}ecke {$L$}-functions]{Moments and One level density of certain unitary families of {H}ecke {$L$}-functions}
\date{\today}
\author{Peng Gao and Liangyi Zhao}

\begin{abstract}
In this paper, we study moments of central values of certain unitary families of Hecke $L$-functions of the Gaussian field, and establish quantitative non-vanishing result for the central values.  We also establish a one level density result for the low-lying zeros of these families of Hecke $L$-functions.
\end{abstract}

\maketitle
\noindent {\bf Mathematics Subject Classification (2010)}:  11M41, 11L40  \newline

\noindent {\bf Keywords}: Hecke $L$-functions, Hecke characters

\section{Introduction}
\label{sec1}

The non-vanishing of central values of $L$-functions is of central importance in number theory.  In the classical case of Dirichlet $L$-functions, S. Chowla \cite{chow} conjectured that $L(1/2, \chi) \neq 0$ for every primitive Dirichlet character $\chi$.  One typical way to investigate this non-vanishing problem is to study the moments of a family of $L$-functions. By considering the first and second mollified moments of $L(1/2, \chi)$, B. Balasubramanian and V. K. Murty \cite{BM} showed that $L(1/2,\chi) \neq 0$ for at least $4\%$ of Dirichlet characters $\chi \mod{q}$. For primitive characters, the proportion was improved to $1/3$ by H. Iwaniec and P. Sarnak in \cite{I&S},  to $34.11\%$ by H. M. Bui [Bu] and most recently to $3/8$ by R. Khan and H. T. Ngo \cite{KN}. \newline

Instead of mollified moments, one may be only interested in the moments of Dirichlet $L$-functions. The first moment of the family of primitive Dirichlet $L$-functions of modulus $q$ has long been known while the second moment is due to R. E. A. C. Paley \cite{Paley}. In \cite{HB2}, D. R. Heath-Brown obtained an asymptotic formula for the fourth moment of the family of $L$-functions associated with primitive Dirichlet characters modulo $q$, provided $q$ does not have too many distinct prime divisors. The formula was extended to all integers by K. Soundararajan in \cite{sound2}. An asymptotic formula for prime moduli with power savings was obtained by M. P. Young in \cite{Young1} and this result was later improved by V. Blomer, E. Fouvry, E. Kowalski, P. Michel and D. Mili\'cevi\'c \cite{BFKMM}. \newline

As an analogue of Dirichlet $L$-functions, T. Stefanicki \cite{Stefanicki} obtained the first and second moments of Dirichlet twists of modular $L$-functions. The formula for the second moment is valid for a density zero set and is extended to almost all integers in \cite{GKR}. \newline

  Motivated by the result of Stefanicki, we consider in this paper a family of Hecke $L$-functions in the Gaussian field. Throughout this paper, we let $K=\mq(i)$ and $\mathcal{O}_K=\mz[i]$ for the ring of integers in $K$. We also denote $U_K=<i>$ for the group of units in $\mathcal{O}_K$. Let $q \in \mathcal{O}_K$ with $(q, 2)=1$ and $\chi$ be a homomorphism:
\begin{align}
\label{chi}
  \chi: \left (\mathcal{O}_K / (q) \right )^*  \rightarrow S^1 :=\{ z \in \mc | \hspace{0.1in} |z|=1 \}.
\end{align}
  We shall say $\chi$ is a character modulo $q$.  Note that in $\mathcal{O}_K$, every ideal co-prime to $2$ has a unique
generator congruent to $1$ modulo $(1+i)^3$ (see the paragraph above Lemma 8.2.1 in \cite{BEW}).  Such a generator is
called primary.  When $q$ is co-prime to $2$, $\chi$ induces a character $\widetilde{\chi}$ modulo $(1+i)^3q$.  To see this, note that the ring
$(\mathcal{O}_K/(1+i)^3q)^*$ is isomorphic to the direct product of the group of units $U_K$ and the group $N_{q}$ formed by elements in $(\mathcal{O}_K/(1+i)^3q)^*$ and congruent to $1 \pmod {(1+i)^3}$ (i.e., primary). Under this isomorphism, any element $n \in (\mathcal{O}_K/(1+i)^3q)^*$ can be written uniquely as $n=u_n \cdot n_0$ with $u_n \in U_K$, $n_0 \in N_{q}$.  We can now define $\widetilde{\chi} \pmod {(1+i)^3q}$ such that for any $n \in (\mathcal{O}_K/(1+i)^3q)^*$,
\begin{align*}
   \widetilde{\chi}(n)=\chi(n_0).
\end{align*}
   We say that $\chi$ is a primitive character modulo $q$ if it does not factor through $\left (\mathcal{O}_K / (q') \right )^*$ for any proper divisor $q'$ of $q$. When $\chi$ is primitive and $\chi(-1)=-1$, we will show in Section \ref{sec2.4} that the character $\widetilde{\chi}$ is also primitive modulo $(1+i)^3q$.  As $\widetilde{\chi}$ is primitive and trivial on units, it follows from the discussions on  \cite[p. 59-60]{iwakow} that $\widetilde{\chi}$ can be regarded as a primitive Hecke character $\pmod {(1+i)^3q}$ of trivial infinite type. We denote $\widetilde{\chi}$ for this Hecke character as well. In the rest of the paper, unless otherwise specified, we shall always regard $\widetilde{\chi}$ as a Hecke character. \newline

   Let $\psi^*(q)$ denote the number of primitive characters $\chi \pmod q$ satisfying $\chi(-1)=-1$ and let $\omega(q)$ denote the number of distinct prime ideals dividing $(q)$. Our first result is the following
\begin{theorem}
\label{moments}
   For $q \in \mathcal{O}_K, (q,2)=1$ and any $\varepsilon > 0$, we have, as $N(q) \rightarrow \infty$,
\begin{equation} \label{firstmoment}
\sumstar_{\substack{\chiq \\ \chi(-1)=-1}} L \left( \frac{1}{2} , \widetilde{\chi} \right)
=\frac 12 \psi^*(q)+O(2^{\omega(q)}N(q)^{1/2+\varepsilon}),
\end{equation}
and
\begin{align} \label{secondmoment}
 \sumstar_{\substack{\chiq \\ \chi(-1)=-1}} \left| L \left( \frac{1}{2}, \widetilde{\chi} \right) \right|^2
=& \left ( \frac {\pi}{16} \frac{\varphi(q)}{N(q)}  \log N(q)+\frac {\pi}{8} \frac{\varphi(q)}{N(q)}\sum_{\substack{\mathfrak{p} |2q}}  \frac {\log N(\mathfrak{p})}{N(\mathfrak{p})-1}  +\frac{\varphi(q)}{N(q)}C_0 \right )\psi^{*}(q) \\
& \hspace*{2cm} +O\Big(N(q)^{3/4+\varepsilon} \Big ).  \nonumber
\end{align}
   Here the $*$ on the sum over $\chi$ restricts the sum to primitive characters, $C_0>0$ is an explicitly computable constant and $\varphi(q) = \# (\mathcal{O}_K / (q))^*$.
\end{theorem}

   We note here the asymptotic formulas in Theorem \ref{moments} are valid for all large $N(q)$ because of the lower bound for $\psi^{*}(q)$ given in \eqref{psibound} and if $N(q) \geq 3$, then (see \cite[(2.1)]{Schaal2})
\begin{align}
\label{omega}
  \omega(q) \ll \frac {\log N(q)}{\log \log N(q)}.
\end{align}

  We readily deduce from Theorem \ref{moments} via a standard argument using Cauchy's inequality (see \cite[p. 568]{BM}), the following
\begin{corollary}
\label{cor1}
  For $q \in \mathcal{O}_K, (q,2)=1$, we have as $N(q) \rightarrow \infty$,
\begin{equation*}
   \# \left\{ \widetilde{\chi}: \chi \bmod {q}, \chi(-1)=-1, \chi \hspace{0.05in} \mbox{primitive}, L\left( \frac{1}{2}, \widetilde{\chi} \right) \neq 0 \right\} \gg
  \frac {\psi^*(q)}{\log N(q)}.
\end{equation*}
\end{corollary}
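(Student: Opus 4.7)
The plan is to deduce the lower bound by the standard Cauchy--Schwarz argument indicated in the text preceding the corollary. Let $\mathcal{N}(q)$ denote the set of primitive characters $\chi \bmod q$ with $\chi(-1)=-1$ and $L(1/2,\widetilde\chi)\neq 0$; this is precisely the quantity we must bound from below.

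First I would exploit the obvious observation that characters with $L(1/2,\widetilde\chi)=0$ contribute nothing to the first moment \eqref{firstmoment}, so the sum on the left of \eqref{firstmoment} is actually supported on $\mathcal{N}(q)$. Applying Cauchy's inequality then gives
\begin{align*}
\left| \sumstar_{\substack{\chi\bmod q\\ \chi(-1)=-1}} L(1/2,\widetilde\chi) \right|^2
= \left| \sum_{\chi\in \mathcal{N}(q)} L(1/2,\widetilde\chi) \right|^2
\leq |\mathcal{N}(q)| \sumstar_{\substack{\chi\bmod q\\ \chi(-1)=-1}} |L(1/2,\widetilde\chi)|^2.
\end{align*}

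Next, I would substitute the two asymptotic formulas from Theorem \ref{moments}. The error term in \eqref{firstmoment} is $O(N(q)^{1/2+\varepsilon}2^{\omega(q)})=O(N(q)^{1/2+2\varepsilon})$ by \eqref{omega}, and this is of strictly smaller order than the main term $\tfrac12\psi^*(q)$, since $\psi^*(q)$ is of size essentially $N(q)$. Hence the left-hand side above is $\gg \psi^*(q)^2$. For the right-hand side, since $\varphi(q)/N(q)\leq 1$, the bound \eqref{secondmoment} yields the estimate $O(\psi^*(q)\log N(q))$ for the second moment. Solving the resulting inequality for $|\mathcal{N}(q)|$ produces
\begin{align*}
|\mathcal{N}(q)| \gg \frac{\psi^*(q)^2}{\psi^*(q)\log N(q)} = \frac{\psi^*(q)}{\log N(q)},
\end{align*}
which is the desired lower bound.

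There is no substantive obstacle here, the real work being contained in Theorem \ref{moments}. The only point requiring verification is that the error term in the first moment is of strictly smaller order than the main term $\tfrac12\psi^*(q)$, and this is immediate from \eqref{omega} together with the lower bound $\psi^*(q)\gg N(q)^{1-\varepsilon}$.
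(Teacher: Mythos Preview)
Your proposal is correct and is exactly the ``standard argument using Cauchy's inequality'' that the paper invokes (without spelling out) immediately before the corollary. The only point one might add is that the error term $O(N(q))$ in \eqref{secondmoment} is absorbed into $O(\psi^*(q)\log N(q))$ because $\psi^*(q)\gg N(q)/(\log\log N(q))^2$, a slightly sharper statement than the $N(q)^{1-\varepsilon}$ bound you quote, but this is routine.
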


  Note that Corollary \ref{cor1} does not establish that $L\left( \frac{1}{2}, \widetilde{\chi} \right) \neq 0$ for a positive proportion of the characters $\chi$ to a given modulus. To obtain a positive proportion result, other than studying the mollified moments, we can also study the $1$-level densities of low-lying zeros of families of $L$-functions.  The density conjecture of N. Katz and P. Sarnak \cites{KS1, K&S} suggests that the distribution of zeros near $1/2$ of a family of $L$-functions is the same as that of eigenvalues near $1$ of a corresponding classical compact group. This conjecture implies that $L(1/2, \chi) \neq 0$ for almost all primitive Dirichlet $L$-functions.  Assuming the generalized Riemann hypothesis (GRH), M. R. Murty  \cite{Murty} showed that at least $50\%$ of both primitive Dirichlet $L$-functions and Dirichlet twists of modular $L$-functions do not vanish at the central point. The result of Murty can be regarded as the $1$-level density of low-lying zeros of the corresponding families of $L$-functions for test functions whose Fourier transforms being supported in $[-2, 2]$. In \cite{HuRu}, H. P. Hughes and Z. Rudnick studied the $1$-level density of low-lying zeros of the family of primitive Dirichlet $L$-functions of a fixed prime modulus. Their work shows that this family is a unitary family. \newline

   Our next result concerns the $1$-level density of low-lying zeros of the family $\{ L(s, \widetilde{\chi}) \}$ of Hecke $L$-functions in $\mq(i)$. Here $\chi$ runs over primitive characters modulo $q$ satisfying $\chi(-1)=-1$ with $q \in \mz[i],(q,2)=1$.
We denote the non-trivial zeroes of the Hecke $L$-function
   $L(s, \widetilde{\chi})$ by $1/2+i \gamma_{\widetilde{\chi}, j}$.  Without assuming GRH, we order them as
\begin{equation*}
    \ldots \leq
   \Re \gamma_{\widetilde{\chi}, -2} \leq
   \Re \gamma_{\widetilde{\chi}, -1} < 0 \leq \Re \gamma_{\widetilde{\chi}, 1} \leq \Re \gamma_{\widetilde{\chi}, 2} \leq
   \ldots.
\end{equation*}
    We set
\begin{align*}
    \tilde{\gamma}_{\widetilde{\chi}, j}= \frac{\gamma_{\widetilde{\chi}, j}}{2 \pi} \log N(q)
\end{align*}
and define for an even Schwartz class function $\phi$,
\begin{equation*}
S(\widetilde{\chi}, \phi)=\sum_{j} \phi(\tilde{\gamma}_{\widetilde{\chi}, j}).
\end{equation*}

  Following \cite[Definition 2.1]{HuRu}, we say a function $f(x)$ is an admissible function if it is a real, even function, whose
Fourier transform $\hat{f}(u)$ is compactly supported, and such that $f(x) \ll (1+|x|)^{-1-\delta}$ for some $\delta>0$.
  Our result is
\begin{theorem}
\label{onelevelunitary}
 Let $\phi(x)$ be an admissible function whose
Fourier transform $\hat{\phi}(u)$ has compact support in $(-2, 2)$. Then for $q \in \mz[i], (q,2)=1$, we have
\begin{align}
\label{unitarydensity}
 \lim_{N(q) \rightarrow \infty}\frac{1}{\psi^*(q)}\sumstar_{\substack{\chiq \\ \chi(-1)=-1}}  S(\widetilde{\chi}, \phi)
 = \int\limits_{\mathbb{R}} \phi(x) \dif x.
\end{align}
   Here the $*$ on the sum over $\chi$ restricts the sum to primitive characters.
\end{theorem}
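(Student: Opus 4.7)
The plan is to apply the Weil explicit formula to convert $S(\widetilde{\chi},\phi)$ into a sum over prime ideals of $\mathcal{O}_K$ and then exploit orthogonality of characters modulo $q$ after averaging. The compact support of $\hat\phi$ is a closed subset of the open interval $(-2,2)$, so there is some $\delta=\delta(\phi)>0$ with $\mathrm{supp}(\hat\phi)\subset[-2+\delta,2-\delta]$; this margin will be essential. Since $\widetilde{\chi}$ is a primitive Hecke character modulo $(1+i)^3 q$ of trivial infinite type with analytic conductor of norm $\asymp N(q)$, a standard contour shift applied to the logarithmic derivative of the completed $L$-function (as in \cite[Proposition~2.1]{HuRu}) yields an identity
\begin{align*}
S(\widetilde{\chi}, \phi) = \hat\phi(0) + O\!\left(\frac{1}{\log N(q)}\right) - \frac{2}{\log N(q)} \sum_{\mathfrak{n}} \frac{\Lambda_K(\mathfrak{n})}{\sqrt{N(\mathfrak{n})}}\, \Re\, \widetilde{\chi}(\mathfrak{n})\, \hat\phi\!\left(\frac{\log N(\mathfrak{n})}{\log N(q)}\right),
\end{align*}
where $\Lambda_K$ is the von Mangoldt function on integral ideals of $\mathcal{O}_K$ and the sum is effectively restricted to $N(\mathfrak{n}) < N(q)^{2-\delta}$. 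Since $\hat\phi(0) = \int_\mathbb{R}\phi(x)\,dx$ is the desired main term of \eqref{unitarydensity}, it suffices to show that the averaged prime sum is $o(1)$.

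For $\mathfrak{n}$ coprime to $(1+i)q$, write $\mathfrak{n}=(n_0)$ with $n_0$ primary, so that $\widetilde{\chi}(\mathfrak{n}) = \chi(n_0)$; the other $\mathfrak{n}$ contribute $0$. Möbius inversion over divisors of $q$ followed by orthogonality for characters with $\chi(-1)=-1$ gives
\begin{align*}
\sumstar_{\substack{\chiq\\ \chi(-1)=-1}}\!\!\chi(n_0) = \sum_{d\mid q} \mu(q/d)\,\frac{\varphi(d)}{2}\bigl(\mathbf{1}_{n_0\equiv 1\,(d)} - \mathbf{1}_{n_0\equiv -1\,(d)}\bigr),
\end{align*}
where $\mu$ and $\varphi$ denote the Möbius and Euler functions on $\mathcal{O}_K$. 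The $d=(1)$ term vanishes identically, and since $(q,2)=1$ every other $d\mid q$ satisfies $1\not\equiv -1\pmod d$; hence on the non-trivial part the indicator conditions force $n_0\neq \pm 1$ (and $-1$ is not primary in $\mz[i]$), so that $N(n_0) \geq N(d)$.

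The main technical obstacle is to bound, for each non-unit divisor $d \mid q$ with $\mu(q/d) \neq 0$, the prime-power sum
\begin{align*}
T(d) := \sum_{\substack{\mathfrak{n}=(n_0),\,(n_0,q)=1 \\ N(d)\leq N(\mathfrak{n}) < N(q)^{2-\delta} \\ n_0 \equiv \pm 1\,(d)}} \frac{\Lambda_K(\mathfrak{n})}{\sqrt{N(\mathfrak{n})}}.
\end{align*}
A Brun--Titchmarsh estimate for prime ideals of $\mathcal{O}_K$ in residue classes modulo $d$, combined with partial summation, gives $T(d) \ll N(q)^{1-\delta/2}/\varphi(d)$. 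Multiplying by $\varphi(d)/2$ and summing over the $\leq 2^{\omega(q)}$ divisors $d$ appearing in the Möbius sum bounds the averaged prime sum by
\begin{align*}
\ll \frac{2^{\omega(q)}\, N(q)^{1-\delta/2}}{\psi^*(q)\log N(q)} \ll \frac{2^{\omega(q)}\,(\log\log N(q))^C}{N(q)^{\delta/2}\log N(q)},
\end{align*}
using the standard lower bound $\psi^*(q)\gg N(q)/(\log\log N(q))^C$. By \eqref{omega}, $2^{\omega(q)} \ll N(q)^{\delta/4}$ for $N(q)$ sufficiently large, so this is $o(1)$. It is precisely the strict inclusion $\mathrm{supp}(\hat\phi)\subset(-2,2)$, yielding the margin $\delta>0$, that makes this unconditional diagonal estimate suffice; at support $\pm 2$ the same calculation would fail to beat the prime-ideal equidistribution barrier.
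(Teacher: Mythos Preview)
Your proposal is correct and follows the same overall strategy as the paper: apply the explicit formula (Lemma~\ref{lem2.4}), expand the character sum via M\"obius and orthogonality (Lemma~\ref{ortho}), bound the residue-class prime sums for each divisor $d\mid q$, and sum over the $\le 2^{\omega(q)}$ squarefree cofactors. The margin $\delta>0$ from the strict support condition is used identically in both arguments.

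The one substantive difference is in how the residue-class sum $T(d)$ is bounded. You invoke a Brun--Titchmarsh inequality for prime ideals of $\mathcal{O}_K$ in residue classes, obtaining $T(d)\ll N(q)^{1-\delta/2}/\varphi(d)$. The paper instead drops $\Lambda_K(n)\le\log N(q)$ and uses a purely elementary lattice-point count (Lemma~\ref{arithmeticprog}): writing $n=td\pm 1$ with $t\neq 0$, one has $N(n)\ge N(td)/64$, which gives $\sum_{n\equiv\pm1\,(d),\,1<N(n)\le X}N(n)^{-1/2}\ll X^{1/2}/N(d)$ directly. After multiplying by $\varphi(d)$ and summing over $d$, both routes yield the same $2^{\omega(q)}N(q)^{1-\delta/2}$ up to a $\log N(q)$. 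The paper's approach is more self-contained (no sieve input needed), while yours exploits the prime-power support of $\Lambda_K$; neither gains over the other here. One small imprecision: your claim ``$N(n_0)\ge N(d)$'' is only true up to a constant factor in $\mathbb{Z}[i]$ (this is exactly what Lemma~\ref{arithmeticprog} supplies), but since $N(d)\le N(q)$ and the upper cutoff is $N(q)^{2-\delta}$, Brun--Titchmarsh applies regardless and the argument is unaffected. You also make explicit the lower bound $\psi^*(q)\gg N(q)/(\log\log N(q))^C$, which the paper leaves implicit in its reduction to $o(N(q)\log N(q))$.
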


    Theorem \ref{onelevelunitary} can be regarded as an analogue to the above mentioned result of H.P. Hughes and Z. Rudnick in \cite{HuRu}. The left-hand side expression of \eqref{unitarydensity} is known as the $1$-level density of low-lying zeros of the family $\{ L(s, \widetilde{\chi}) \}$.   In connection with the random matrix theory (see the discussions in \cite{G&Zhao2}), the right-hand side expression of \eqref{unitarydensity} shows that the family is also a unitary family.   \newline

   Using the argument in the proof of \cite[Corollary 1.4]{G&Zhao4}, we deduce readily a  positive proportion non-vanishing result for the family of Hecke $L$-functions under our consideration.
 \begin{corollary}
 Suppose that the GRH is true and that $1/2$ is a zero of $L \left( s, \widetilde{\chi} \right)$ of order $n_{\widetilde{\chi}} \geq 0$.  As $N(q) \to \infty$,
 \[  \sumstar_{\substack{\chiq \\ \chi(-1)=-1}} n_{\widetilde{\chi}} \leq  \left( \frac{1}{2} + o(1) \right) \psi^*(q). \]
   Moreover, as $N(q) \to \infty$
\[   \# \left\{ \widetilde{\chi}: \chi \bmod {q}, \chi(-1)=-1, \chi \hspace{0.05in} \mbox{primitive}, L\left( \frac{1}{2}, \widetilde{\chi} \right) \neq 0 \right\} \geq \left( \frac{1}{2} + o(1) \right) \psi^*(q) . \]
    \end{corollary}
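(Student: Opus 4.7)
The plan is to apply Theorem \ref{onelevelunitary} to a non-negative test function of Fej\'er type, following the by-now standard argument for extracting non-vanishing from a one-level density statement. Under GRH all non-trivial zeros of $L(s,\widetilde{\chi})$ lie on the critical line, so the normalized ordinates $\tilde\gamma_{\widetilde\chi,j}$ are real and the zero at the central point contributes exactly $n_{\widetilde\chi}\phi(0)$ to $S(\widetilde\chi,\phi)$ for any even Schwartz $\phi$. If in addition $\phi\ge 0$ on $\mathbb{R}$, then discarding the non-central terms gives the pointwise bound $n_{\widetilde\chi}\le S(\widetilde\chi,\phi)$, which is what will drive the argument.

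For each $\nu\in(0,2)$ I would take the admissible test function
\[
\phi_\nu(x)=\left(\frac{\sin(\pi\nu x)}{\pi\nu x}\right)^2,
\]
which is non-negative on $\mathbb{R}$, satisfies $\phi_\nu(0)=1$, and whose Fourier transform
\[
\hat\phi_\nu(u)=\frac{1}{\nu}\max\!\left(1-\frac{|u|}{\nu},\,0\right)
\]
is supported in $[-\nu,\nu]\subset(-2,2)$, so $\phi_\nu$ meets the hypotheses of Theorem \ref{onelevelunitary}. Summing $n_{\widetilde\chi}\le S(\widetilde\chi,\phi_\nu)$ over the family and invoking Theorem \ref{onelevelunitary} produces, for each fixed such $\nu$,
\[
\limsup_{N(q)\to\infty}\frac{1}{\psi^*(q)}\sumstar_{\substack{\chiq\\ \chi(-1)=-1}} n_{\widetilde\chi}\ \le\ \int_{\mathbb{R}}\phi_\nu(x)\,\dif x\ =\ \hat\phi_\nu(0)\ =\ \frac{1}{\nu}.
\]
Letting $\nu\to 2^-$ delivers the first assertion of the corollary. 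The second assertion is immediate: the number of odd primitive $\chi\bmod q$ with $L(1/2,\widetilde\chi)=0$ equals $\#\{\widetilde\chi:n_{\widetilde\chi}\ge 1\}$, which is bounded above by $\sumstar n_{\widetilde\chi}\le (1/2+o(1))\psi^*(q)$, and subtracting from $\psi^*(q)$ gives the stated lower bound.

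There is no substantive obstacle; this is essentially a bookkeeping exercise once Theorem \ref{onelevelunitary} is in hand. The only point requiring any care is that Theorem \ref{onelevelunitary} demands that $\hat\phi$ be compactly supported in the \emph{open} interval $(-2,2)$, which forces one to work with $\nu<2$ and take $\nu\to 2^-$ only \emph{after} letting $N(q)\to\infty$, rather than setting $\nu=2$ directly. The test function $\phi_\nu$ above is optimal in the sense of minimizing the ratio $\hat\phi(0)/\phi(0)$ subject to $\phi\ge 0$ and $\operatorname{supp}\hat\phi\subset[-\nu,\nu]$, so the constant $1/2$ is the best one can extract from this approach with support up to $(-2,2)$.
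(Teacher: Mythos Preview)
Your proposal is correct and is precisely the standard Fej\'er-kernel argument that the paper has in mind; indeed, the paper gives no in-text proof of this corollary at all, merely citing \cite[Corollary 1.4]{G&Zhao4} for ``the argument,'' and what you have written is exactly that argument. Your care with the open-interval support condition and the limit $\nu\to 2^{-}$ taken after $N(q)\to\infty$ is appropriate and matches how these deductions are done in the literature.
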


\subsection{Notations} The following notations and conventions are used throughout the paper.\\
\noindent $e(z) = \exp (2 \pi i z) = e^{2 \pi i z}$. \newline
$f =O(g)$ or $f \ll g$ means $|f| \leq cg$ for some unspecified
positive constant $c$. \newline
$f =o(g)$ means $\displaystyle \lim_{x \rightarrow \infty}f(x)/g(x)=0$. \newline
$K=\mq(i), \mathcal{O}_K=\mz[i]$. \newline
$\mu_{[i]}$ denotes the M\"obius function on $\mathcal{O}_K$. \newline
$\varphi$ denotes Euler's totient function on $\mathcal{O}_K$. \newline
$\varpi$ denotes a prime in $K$.

\section{Preliminaries}
\label{sec 2}
\subsection{Orthogonality relations and primitive Hecke characters}
\label{sec2.4}
  Let $q \in \mathcal{O}_K, (q, 2)=1$ and let $\chi$ be a primitive character modulo $q$ defined in \eqref{chi} satisfying $\chi(-1)=-1$. We note the following orthogonality relations. As the proof is similar to the classical case (see \cite[Lemma 1]{sound2}), we omit it here.
\begin{lemma}
\label{ortho}
  Let $q \in \mathcal{O}_K, (q, 2)=1$.  Let $a=\pm 1$,  we have for $(nm,q)=1$
\begin{align*}
\sumstar_{\substack {\chiq \\ \chi(-1)=(-1)^a}} \chi(n)\overline{\chi}(m) = \frac 12 \sum_{\substack{d|q\\
n\equiv m \bmod d}} \mu_{[i]}(q/d) \varphi(d)+ \frac {(-1)^a}2 \sum_{\substack{d|q\\
n\equiv -m \bmod d}} \mu_{[i]}(q/d) \varphi(d).
\end{align*}
\end{lemma}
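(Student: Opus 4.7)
The plan is to mimic the classical proof in Soundararajan's paper \cite{sound2}, adapting it to the ring $\mathcal{O}_K = \mz[i]$. The two key ingredients are (i) standard orthogonality of all characters modulo $q$ and (ii) Möbius inversion over divisors of $q$ to sieve down to primitive characters, followed by (iii) separation of the parity condition via the projector $(1 + (-1)^a \chi(-1))/2$.

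First I would record the full orthogonality relation in $\mathcal{O}_K$: for $(nm,q)=1$,
\begin{align*}
\sum_{\chi \bmod q} \chi(n)\overline{\chi}(m) = \varphi(q)\,\mathbbm{1}_{n \equiv m \bmod q}.
\end{align*}
Every character modulo $q$ is induced from a unique primitive character modulo some divisor $d\mid q$, and because $(nm,q)=1$ the induced character takes the same values on $n$ and $m$ as the underlying primitive one. Hence
\begin{align*}
\sum_{\chi \bmod q} \chi(n)\overline{\chi}(m) = \sum_{d\mid q} \sumstar_{\chi \bmod d} \chi(n)\overline{\chi}(m),
\end{align*}
and Möbius inversion on the divisor lattice of $\mathcal{O}_K$ yields
\begin{align*}
\sumstar_{\chi \bmod q}\chi(n)\overline{\chi}(m) = \sum_{\substack{d \mid q \\ n \equiv m \bmod d}} \mu_{[i]}(q/d)\,\varphi(d).
\end{align*}

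To install the parity constraint, write $\mathbbm{1}_{\chi(-1)=(-1)^a} = \tfrac{1}{2}(1 + (-1)^a\chi(-1))$ and use $\chi(-1)\chi(n) = \chi(-n)$ to get
\begin{align*}
\sumstar_{\substack{\chi \bmod q \\ \chi(-1)=(-1)^a}} \chi(n)\overline{\chi}(m)
= \tfrac{1}{2}\sumstar_{\chi \bmod q}\chi(n)\overline{\chi}(m) + \tfrac{(-1)^a}{2}\sumstar_{\chi \bmod q}\chi(-n)\overline{\chi}(m).
\end{align*}
Applying the previous display once with $n$ and once with $-n$ (both coprime to $q$) delivers exactly the two sums in the statement.

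The only genuine point to check is that the standard ``primitive character induces from a unique divisor'' bookkeeping is valid verbatim in $\mathcal{O}_K$. This is routine: $(q,2)=1$ together with the fact that $\mathcal{O}_K$ is a PID and $(\mathcal{O}_K/(q))^*$ decomposes as a product of $(\mathcal{O}_K/(\varpi^k))^*$ over prime powers dividing $q$ lets one run the conductor-induction argument exactly as over $\mz$, and Möbius inversion is then the usual one on the divisor lattice with the $\mathcal{O}_K$-Möbius function $\mu_{[i]}$. No subtleties specific to the Gaussian setting arise, so I expect no real obstacle beyond carefully setting up these preliminaries.
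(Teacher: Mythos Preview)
Your proposal is correct and is exactly the classical argument the paper has in mind: the paper omits the proof entirely, remarking only that it is ``similar to the classical case (see \cite[Lemma 1]{sound2})'', which is precisely the orthogonality--M\"obius inversion--parity projector route you outline.
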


    By setting $n=m=1$ in Lemma \ref{ortho}, we deduce immediately the following
\begin{corollary}
\label{psi}
  Let $q \in \mathcal{O}_K, (q, 2)=1$ and let $\psi^*(q)$ denote the number of primitive characters $\chi \pmod q$ satisfying $\chi(-1)=-1$, then
\begin{align*}
  \psi^*(q)=\frac 12\psi(q)-\frac 12\mu_{[i]}(q),
\end{align*}
 where $\psi(q)$ denotes the number of primitive characters $\chi \pmod q$. Moreover, $\psi(q)$ is a multiplicative function given by $\psi(\varpi) = N(\varpi)- 2$ for primes $\varpi$, and $\psi(\varpi^k) = N(\varpi)^k(1 -1/N(\varpi))^2$ for $k \geq  2$.

\end{corollary}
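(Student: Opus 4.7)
\medskip

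\noindent\textbf{Proof proposal.} My plan is to set $n=m=1$ in Lemma \ref{ortho} with $a=1$ (so that $(-1)^a=-1$ and the sum ranges over primitive characters satisfying $\chi(-1)=-1$), and then to handle the two resulting divisor sums separately.

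For the first sum, the condition $n\equiv m\bmod d$ becomes $1\equiv 1\bmod d$, which is automatic. The Dirichlet convolution identity
\begin{align*}
\sum_{d\mid q}\mu_{[i]}(q/d)\varphi(d) = \psi(q)
\end{align*}
(obtained by Möbius inversion from the standard decomposition $\varphi(q)=\sum_{d\mid q}\psi(d)$, which counts all characters modulo $q$ according to their conductor) identifies this first sum as $\tfrac12\psi(q)$. For the second sum, the condition $1\equiv-1\bmod d$ forces $d\mid 2$; but $(q,2)=1$ then forces $d=1$, leaving only the term $-\tfrac12\mu_{[i]}(q)\varphi(1)=-\tfrac12\mu_{[i]}(q)$. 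Adding the two contributions yields the claimed formula for $\psi^{*}(q)$.

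For the multiplicativity of $\psi$, I observe that $\psi=\mu_{[i]}\ast\varphi$ as Dirichlet convolution on ideals of $\mathcal{O}_K$, and since both $\mu_{[i]}$ and $\varphi$ are multiplicative, so is $\psi$. For the prime power values, I use $\mu_{[i]}(\varpi^m)=0$ for $m\geq 2$, so only two terms of the convolution survive at a prime power $\varpi^k$, giving
\begin{align*}
\psi(\varpi^k)=\varphi(\varpi^k)-\varphi(\varpi^{k-1}).
\end{align*}
For $k=1$ this equals $N(\varpi)-1-1=N(\varpi)-2$, and for $k\geq 2$ it equals $N(\varpi)^{k-2}(N(\varpi)-1)^{2}=N(\varpi)^{k}(1-1/N(\varpi))^{2}$.

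There is no real obstacle here; the only point requiring a touch of care is the vanishing of the second divisor sum, where one must use the coprimality hypothesis $(q,2)=1$ to conclude that only $d=1$ contributes. Everything else is routine Möbius inversion and evaluation of $\varphi$ on prime powers in $\mathcal{O}_K$.
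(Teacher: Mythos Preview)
Your proposal is correct and follows exactly the approach indicated in the paper, namely setting $n=m=1$ in Lemma~\ref{ortho}; you have simply filled in the routine details (identifying the first divisor sum as $\psi(q)=\mu_{[i]}\ast\varphi$, using $(q,2)=1$ to collapse the second sum, and evaluating $\psi$ on prime powers) that the paper leaves implicit.
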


   We note that Corollary \ref{psi} implies that for $(q,2)=1$, we have
\begin{align}
\label{psibound}
  \psi^*(q), \psi(q) \gg N(q) \left ( \frac {\varphi(q) }{N(q) }\right )^2 \gg \frac {N(q)}{\log \log N(q)}.
\end{align}

   Now we show that the induced character $\widetilde{\chi}$ modulo $(1+i)^3q$ is also primitive. Suppose that $\widetilde{\chi}$ is induced by a character modulo $(1+i)^3q'$ for some proper divisor $q'$ of $q$. Then as $\chi$ is primitive, there exists a $c \equiv 1 \pmod {q'}$ such that $\chi(c) \neq 1$. By the Chinese Remainder Theorem, we can then find a $c_0$ such that $c_0 \equiv 1 \pmod {(1+i)^3}$ and $c_0 \equiv c \pmod q$. It follows from our definition that $\widetilde{\chi}(c_0)=\chi(c) \neq 1$. This contradiction shows that $\widetilde{\chi}$ can only be possibly induced by a character $\chi'$ modulo $(1+i)^2q$. But in this case, we can again apply the Chinese Remainder Theorem to find a $c_0$ such that
$c_0 \equiv -1 \pmod {(1+i)^3}$ and $c_0 \equiv 1 \pmod q$.  As $-1 \equiv 1 \pmod {(1+i)^2}$, we have $c_0 \equiv 1 \pmod {(1+i)^2q}$ so that $\chi'(c_0)=1$. However, it follows from the definition that $\widetilde{\chi}(c_0)=\chi(-c_0)=-1$. This implies that $\widetilde{\chi}$ can not be induced by $\chi'$ either and hence is primitive.

\subsection{The approximate functional equation}
\label{AFE}

      Let $\widetilde{\chi}$ be given as in the previous section regarding as a primitive Hecke character modulo $(1+i)^3q$ of trivial infinite type. The Hecke $L$-function associated with this Hecke character $\widetilde{\chi}$ is defined for $\Re(s) > 1$ by
\begin{equation*}
  L(s, \widetilde{\chi} ) = \sum_{0 \neq \mathcal{A} \subset
  \mathcal{O}_K}\widetilde{\chi}(\mathcal{A})(N(\mathcal{A}))^{-s},
\end{equation*}
  where $\mathcal{A}$ runs over all non-zero integral ideals in $K$ and $N(\mathcal{A})$ is the
norm of $\mathcal{A}$. As shown by E. Hecke, $L(s, \widetilde{\chi})$ admits
analytic continuation to an entire function and satisfies a
functional equation (see \cite[Corollary 8.6]{Newkirch}):
\begin{equation}
\label{1.1}
  \Lambda(s, \widetilde{\chi}) = g(\widetilde{\chi})(N((1+i)^3q))^{-1/2}\Lambda(1-s, \overline{\widetilde{\chi}}),
\end{equation}
   where $D_K=-4$ is the discriminant of $K$, $g(\widetilde{\chi})$ is the Gauss sum defined by
\begin{equation*}
   g(\widetilde{\chi})=\sum_{x \bmod (1+i)^3q} \widetilde{\chi}(x) \widetilde{e}\leg{x}{(1+i)^3q}, \quad \widetilde{e}(z) =e \left( \text{tr} \left( {\frac {z}{2i}} \right) \right),
\end{equation*}
  and
\begin{equation*}
  \Lambda(s, \widetilde{\chi}) = (|D_K|N((1+i)^3q))^{s/2}(2\pi)^{-s}\Gamma(s)L(s, \widetilde{\chi}).
\end{equation*}
   We refer the reader to \cite{Newkirch} for a more detailed discussion of the Hecke characters and $L$-functions.  \newline

   Note that we have $|g(\widetilde{\chi})|=(N((1+i)^3q))^{1/2}$ (see \cite[Exercise 12, p. 61]{iwakow}) and that it follows from the definition that $\overline{g(\widetilde{\chi})}=\widetilde{\chi}(-1)g(\overline{\widetilde{\chi}})=g(\overline{\widetilde{\chi}})$, as $\widetilde{\chi}(-1)(-1)=1$.
From this and \eqref{1.1}, we get that
\begin{equation}
\label{1.1'}
  \Lambda \left( \frac 12+s, \widetilde{\chi} \right) \Lambda \left( \frac 12+s, \overline{\widetilde{\chi}} \right) =\Lambda \left( \frac 12-s, \widetilde{\chi} \right) \Lambda \left( \frac 12-s, \overline{\widetilde{\chi}} \right).
\end{equation}

    For $c>1/2$ we consider
$$
I:= \frac{1}{2\pi i} \int\limits_{(c)}
\frac{\Lambda(1/2+s, \widetilde{\chi}) \Lambda(1/2+s, \overline{\widetilde{\chi}}) }{\Gamma(1/2)^2}
\frac{ds}{s}.
$$
We move the line of integration to Re$(s)=-c$ and use the
relation \eqref{1.1'} to see that
$I=|L(1/2,\widetilde{\chi})|^2 -I$, so that $|L(1/2,\widetilde{\chi})|^2 =2I$.
On the other hand, expanding $L(1/2+s,\widetilde{\chi})L(1/2+s,\overline{\widetilde{\chi}})$ into
its Dirichlet series and integrating termwise, we get
$I=A(\widetilde{\chi})$, where
\begin{align}
\label{achi}
A(\widetilde{\chi}) := \sum_{0 \neq \mathcal{A}, \mathcal{B} \subset
  \mathcal{O}_K} \widetilde{\chi}(\mathcal{A})\overline{\widetilde{\chi}(\mathcal{B})}
(N(\mathcal{A})N(\mathcal{B}))^{-1/2} W\left( \frac {N(\mathcal{A})N(\mathcal{B})}{N(q)}\right),
\end{align}
 with
$$
W(x) =
\frac{1}{2\pi i} \int\limits_{(c)}
\left( \frac{\Gamma(s+1/2)}
{\Gamma(1/2)}\right)^2 \left( \frac {2|D_k|}{\pi^2} \right)^s x^{-s} \frac{ds}{s},
$$
for any positive $x, c$.  Similar to \cite[(1.3a), (1.3b)]{sound2}, we have for any $j \geq 0$,
\begin{align}
\label{w}
W(x) = 1+ O(x^{1/2 -\epsilon}), \quad W^{(j)}(x) = O_c (x^{-c} ).
\end{align}

 On the other hand, we note the following expression for $L(1/2, \widetilde{\chi})$ (see \cite[Section 2.3]{G&Zhao3}):
\begin{equation} \label{approxfuneq}
\begin{split}
 L \left( \frac{1}{2}, \widetilde{\chi} \right) = \sum_{0 \neq \mathcal{A} \subset
  \mathcal{O}_K}  & \frac{\widetilde{\chi}(\mathcal{A})}{N(\mathcal{A})^{1/2}}V \left(\frac{N(\mathcal{A})}{x} \right) \\
  & + \frac{g(\widetilde{\chi})}{N((1+i)^3q)^{1/2}}\sum_{0 \neq \mathcal{A} \subset
  \mathcal{O}_K}\frac{\overline{\widetilde{\chi}}(\mathcal{A})}{N(\mathcal{A})^{1/2}}V\left(\frac{
  N(\mathcal{A})x}{|D_K|N((1+i)^3q)} \right),
     \end{split}
\end{equation}
     where $x>0$ and
\begin{align*}
  V \left(\xi \right)=\frac {1}{2\pi
   i}\int\limits\limits_{(2)}\frac {\Gamma(s+1/2)}{\Gamma (1/2)}\frac
   {(2\pi\xi)^{-s}}{s} \ \dif s.
\end{align*}

   We note (see \cite[Lemma 2.1]{sound1}) the following estimation for the $j$-th derivative of $V(\xi)$:
\begin{equation} \label{2.07}
      V\left (\xi \right) = 1+O(\xi^{1/2-\varepsilon}) \; \mbox{for} \; 0<\xi<1   \quad \mbox{and} \quad V^{(j)}\left (\xi \right) =O(e^{-\xi}) \; \mbox{for}
      \; \xi >0, \; j \geq 0.
\end{equation}

\subsection{The explicit formula}
\label{section: Explicit Formula}

Our approach of Theorem \ref{onelevelunitary} relies on the following explicit
formula, which essentially converts a sum over zeros of an
$L$-function to a sum over primes. As it is similarly to that of \cite[Lemma 2.3]{G&Zhao2}, we omit its proof here.
\begin{lemma}
\label{lem2.4}
   Let $\phi(x)$ be an admissible function whose
Fourier transform $\hat{\phi}(u)$ has compact support in $[-2, 2]$. Let $\Lambda_K$ be the von Mangoldt function in $K$. Then for $q \in \mathcal{O}_K, (q,2)=1$ and any primitive character $\chi$ modulo $q$ satisfying $\chi(-1)=-1$, we have
\begin{equation*}
S(\widetilde{\chi}, \phi) =\int\limits^{\infty}_{-\infty}  \phi(t) \dif t-\frac 1{\log X}\sum_{(n)}\frac {\Lambda_K(n)}{\sqrt{N(n)}}\hat{\phi}\left( \frac {\log N(n)}{\log N(q)} \right)\left ( \widetilde{\chi}(n)  +\overline{\widetilde{\chi}}(n)\right )+O\left(\frac{1}{\log
N(q)}\right).
\end{equation*}
\end{lemma}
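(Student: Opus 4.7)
The plan is to derive the formula by a standard contour-integration applied to the logarithmic derivative of the completed $L$-function, using the functional equation \eqref{1.1'} to pair $\widetilde{\chi}$ with $\overline{\widetilde{\chi}}$ into a symmetric object. Set
\[
\Xi(s) := \Lambda\!\left(\tfrac12+s,\widetilde{\chi}\right) \Lambda\!\left(\tfrac12+s,\overline{\widetilde{\chi}}\right),
\]
which is entire and satisfies $\Xi(s)=\Xi(-s)$ by \eqref{1.1'}. Its zeros are exactly the shifted nontrivial zeros of $L(s,\widetilde{\chi})$ and $L(s,\overline{\widetilde{\chi}})$; using that $\phi$ is real and even and that the coefficients of $L(s,\overline{\widetilde{\chi}})$ are complex conjugates of those of $L(s,\widetilde{\chi})$, the sum of $\phi\!\left(s\log N(q)/(2\pi i)\right)$ over all zeros of $\Xi$ equals $2\,S(\widetilde{\chi},\phi)$.

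First, I would apply Cauchy's theorem with $c=1/2+\varepsilon$ to obtain
\[
2\,S(\widetilde{\chi},\phi) = \frac{1}{2\pi i}\left(\int_{(c)}-\int_{(-c)}\right)\frac{\Xi'}{\Xi}(s)\,\phi\!\left(\frac{s\log N(q)}{2\pi i}\right)ds,
\]
the horizontal segments contributing nothing by the Paley--Wiener decay of $\phi$. Since $\Xi(s)=\Xi(-s)$ forces $\Xi'/\Xi$ to be odd, the substitution $s\mapsto -s$ (combined with the evenness of $\phi$) shows the two line integrals are equal, so
\[
S(\widetilde{\chi},\phi)=\frac{1}{2\pi i}\int_{(c)}\frac{\Xi'}{\Xi}(s)\,\phi\!\left(\frac{s\log N(q)}{2\pi i}\right)ds.
\]

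Next, differentiating the definition of $\Lambda$ logarithmically yields
\[
\frac{\Xi'}{\Xi}(s) = \log\!\frac{|D_K|N((1+i)^3 q)}{4\pi^2} + 2\,\frac{\Gamma'}{\Gamma}\!\left(\tfrac12+s\right) + \frac{L'}{L}\!\left(\tfrac12+s,\widetilde{\chi}\right) + \frac{L'}{L}\!\left(\tfrac12+s,\overline{\widetilde{\chi}}\right),
\]
which I would treat termwise. The constant piece, after shifting to $\Re(s)=0$ (legal by Paley--Wiener) and applying Fourier inversion at the origin, produces $\log(|D_K|N((1+i)^3 q)/(4\pi^2))/\log N(q)\cdot\int_{\mathbb{R}}\phi(t)\,dt = \int_{\mathbb{R}}\phi(t)\,dt + O(1/\log N(q))$. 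For each $L'/L$ piece, expand $-L'/L(\tfrac12+s,\widetilde{\chi})=\sum_{(n)}\Lambda_K(n)\widetilde{\chi}(n)N(n)^{-1/2-s}$ absolutely on $\Re(s)=c$, interchange sum and integral, and use the Fourier--Mellin identity
\[
\frac{1}{2\pi i}\int_{(c)}N(n)^{-s}\phi\!\left(\frac{s\log N(q)}{2\pi i}\right)ds=\frac{1}{\log N(q)}\,\hat{\phi}\!\left(\frac{\log N(n)}{\log N(q)}\right),
\]
proved termwise by shifting each integral to $\Re(s)=0$ and substituting $s=2\pi i u/\log N(q)$. Adding the two $L'/L$ contributions produces the prime sum in the lemma with the factor $\widetilde{\chi}(n)+\overline{\widetilde{\chi}}(n)$ and the sign $-1/\log N(q)$.

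The main technical point is the gamma contribution: the same change of variable turns it into $\frac{2}{\log N(q)}\int_{\mathbb{R}}\frac{\Gamma'}{\Gamma}(\tfrac12+2\pi i u/\log N(q))\phi(u)\,du$. Stirling gives $|\Gamma'/\Gamma(\tfrac12+iy)|\ll \log(2+|y|)$, and since $\phi$ is admissible with $\phi(u)\ll(1+|u|)^{-1-\delta}$ the integrand is integrable uniformly in $N(q)$, so this piece contributes $O(1/\log N(q))$. Collecting the main term, the prime sum, and the two error contributions gives the displayed formula.
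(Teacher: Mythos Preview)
Your argument is correct and is the standard contour-integration derivation of the explicit formula; the paper itself omits the proof entirely, merely remarking that it is analogous to \cite[Lemma~2.3]{G&Zhao2}, so there is nothing to compare against beyond noting that your sketch is precisely the expected route. One minor point worth making explicit: the claim that the sum of $\phi$ over the zeros of $\Xi$ equals $2S(\widetilde{\chi},\phi)$ uses that the nontrivial zeros $\tfrac12+i\gamma$ of $L(s,\widetilde{\chi})$ come in pairs $\gamma,\bar\gamma$ (from the functional equation together with $\overline{L(s,\widetilde{\chi})}=L(\bar s,\overline{\widetilde{\chi}})$), so $S(\widetilde{\chi},\phi)$ is real and hence equals $S(\overline{\widetilde{\chi}},\phi)$.
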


\section{Proof of Theorem ~\ref{moments}}

\subsection{Evaluation of the first moment}
\label{sec3.1}

   Since any integral non-zero ideal $\mathcal{A}$ co-prime to $2$ in $\mathcal{O}_K$ has a unique primary generator $a$,
we apply the approximate functional equation \eqref{approxfuneq} and the orthogonality relations Lemma \ref{ortho} to get that
\begin{align*}
  \sumstar_{\substack{\chiq \\ \chi(-1)=-1}} L \left( \frac{1}{2}, \widetilde{\chi} \right) =& \sum_{\substack {n \equiv 1 \bmod (1+i)^3}}
\frac{1}{\sqrt{N(n)}}V \left( \frac {N(n)}{x} \right)\sumstar_{\substack{\chiq \\ \chi(-1)=-1}}\widetilde{\chi}(n)  \\
& \hspace*{2cm} +\frac 1{(8N(q))^{1/2}}\sum_{\substack {n \equiv 1 \bmod (1+i)^3}}
\frac{1}{\sqrt{N(n)}}V \left( \frac {N(n)x}{32N(q)} \right) \sumstar_{\substack{\chiq \\ \chi(-1)=-1}}\overline{\widetilde{\chi}}(n)g(\widetilde{\chi}) \\
=& S_{1,1}+ S_{1,2}+S_{1,3}+ S_{1,4},
\end{align*}
   where
\begin{align*}
  S_{1,1} &= \frac 12\sum_{\substack{d|q \\ d \equiv 1 \bmod (1+i)^3}} \mu_{[i]}(q/d) \varphi(d) \sum_{\substack{n\equiv 1
\bmod (1+i)^3d\\(n,q)=1}}\frac{1}{\sqrt{N(n)}}V \left( \frac {N(n)}{x} \right), \\
 S_{1,2} &=-\frac 12\sum_{\substack{d|q \\ d \equiv 1 \bmod (1+i)^3}} \mu_{[i]}(q/d) \varphi(d) \sum_{\substack{n\equiv -1
\bmod d\\ n\equiv 1
\bmod (1+i)^3 \\ (n,q)=1}}\frac{1}{\sqrt{N(n)}}V \left( \frac {N(n)}{x} \right), \nonumber \\
 S_{1,3} &= \frac 12 \cdot \frac 1{(8N(q))^{1/2}}\sum_{\substack{d|q \\ d \equiv 1 \bmod (1+i)^3}} \mu_{[i]}(q/d) \varphi(d) \sum_{n\equiv 1
\bmod (1+i)^3}
\frac{1}{\sqrt{N(n)}} \\
& \hspace*{2cm} \times V \left( \frac {N(n)x}{32N(q)} \right)\sum_{\substack {x \bmod (1+i)^3q \\ x \equiv n \bmod d}}\widetilde{e}\leg{x}{(1+i)^3q}, \nonumber\\
 S_{1,4} &=-\frac 12 \cdot \frac 1{(8N(q))^{1/2}}\sum_{\substack{d|q \\ d \equiv 1 \bmod (1+i)^3}} \mu_{[i]}(q/d) \varphi(d) \sum_{n\equiv 1
\bmod (1+i)^3}
\frac{1}{\sqrt{N(n)}} \\
& \hspace*{2cm} \times V \left( \frac {N(n)x}{32N(q)} \right)\sum_{\substack {x \bmod (1+i)^3q \\ x \equiv -n \bmod d}}\widetilde{e}\leg{x}{(1+i)^3q}. \nonumber
\end{align*}

   As $\widetilde{e}(c) \ll 1 $ for $c \in \mathcal{O}_K$, we have that
\begin{align*}
  \sum_{\substack {x \bmod (1+i)^3q \\ x \equiv n \bmod d}}\widetilde{e}\leg{x}{(1+i)^3q} \ll \sum_{\substack {x \bmod (1+i)^3q \\ x \equiv n \bmod d}}1
\ll \frac {N(q)}{N(d)}.
\end{align*}

   It follows that
\begin{align*}
  S_{1,3} \ll  N(q)^{1/2}\sum_{\substack{d|q \\ d \equiv 1 \bmod (1+i)^3}} \mu^2_{[i]}(q/d) \frac {\varphi(d)}{N(d)}  \sum_{n\equiv 1
\bmod (1+i)^3}
\frac{1}{\sqrt{N(n)}}V \left( \frac {N(n)x}{32N(q)} \right) \ll \frac {N(q)^{1+\varepsilon}}{x^{1/2}}2^{\omega(q)}.
\end{align*}

   Similarly, we also have
\begin{align*}
  S_{1,4} \ll  \frac {N(q)^{1+\varepsilon}}{x^{1/2}}2^{\omega(q)}.
\end{align*}

  In the evaluation of $S_{1,1}$, we write $n=td+1$  with $t \in \mathcal{O}_K$. The term $t=0$ gives the main term:
\begin{align*}
  M_1 &= \frac 12\sum_{\substack{d|q \\ d \equiv 1 \bmod (1+i)^3}} \mu_{[i]}(q/d) \varphi(d) V \left( \frac {1}{x} \right) =\frac 12\sum_{\substack{d|q \\ d \equiv 1 \bmod (1+i)^3}} \mu_{[i]}(q/d) \varphi(d) \left( 1+O \left( x^{-1/2+\varepsilon} \right) \right) \\
 &=\frac 12\psi^*(q)+O\left( N(q)x^{-1/2+\varepsilon} \right),  \nonumber
\end{align*}
  where we have used Corollary \ref{psi} and the fact that
\begin{align}
\label{phi}
   \sum_{\substack{d|q \\ d \equiv 1 \bmod (1+i)^3}} \varphi(d) =N(q).
\end{align}

 To treat the contribution from the terms $n \neq 1$ in $S_{1,1}$, we need the following lemma.
\begin{lemma}
\label{arithmeticprog}
 Let $m, n \in \mz[i]$ satisfying $N(m+n) \geq N(n)$, then we have

\begin{align}
\label{normineq}
  N(m+n) \geq \frac {N(m)}{64}.
\end{align}
\end{lemma}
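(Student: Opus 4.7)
The plan is to prove this by a direct application of the triangle inequality in $\comc$, using the identification $N(z) = |z|^2$ for $z \in \mz[i]$. Indeed, for any Gaussian integer $z = a+bi$, we have $N(z) = a^2 + b^2 = |z|^2$, so the hypothesis $N(m+n) \geq N(n)$ is equivalent to $|m+n| \geq |n|$, and the desired conclusion $N(m+n) \geq N(m)/64$ is equivalent to $|m+n| \geq |m|/8$.

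The key observation is that $m = (m+n) - n$, so by the triangle inequality
\[
|m| \;\leq\; |m+n| + |n|.
\]
Combined with the hypothesis $|n| \leq |m+n|$, this gives $|m| \leq 2|m+n|$, i.e.\ $|m+n| \geq |m|/2$. Squaring yields $N(m+n) \geq N(m)/4$, which is strictly stronger than the claimed \eqref{normineq}.

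So the lemma follows immediately and with room to spare. There is no real obstacle here; the ratio $1/64$ is far from optimal, and I suspect the authors have chosen this loose constant either to accommodate a later application where $m$ and $n$ are replaced by linear combinations that pick up a factor related to $N((1+i)^3) = 8$ (so that $8^2 = 64$ appears naturally), or simply for notational convenience when tracking constants downstream. In the proof itself, one only needs to write out the triangle inequality argument above; the constant can then be relaxed from $4$ to $64$ at the final step.
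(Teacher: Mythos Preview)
Your proof is correct, and in fact both simpler and sharper than the paper's own argument. The paper proceeds by a case split: if $N(n)\geq N(m)/64$ the conclusion is immediate from the hypothesis, and if $N(n)<N(m)/64$ they write $m=a+bi$, $n=c+di$, bound $\max\{|c|,|d|\}\leq \sqrt{a^2+b^2}/8$, and then verify the real inequality $a^2+2ac+b^2+2bd\geq (a^2+b^2)/64$ by hand. Your route bypasses all of this by working directly with absolute values in $\comc$ and invoking the triangle inequality $|m|\leq |m+n|+|n|\leq 2|m+n|$, which yields the stronger bound $N(m+n)\geq N(m)/4$. The paper's coordinate computation is unnecessary here; your observation that $N(z)=|z|^2$ makes the norm submultiplicative with respect to the triangle inequality is the cleaner idea, and the looser constant $64$ in the statement is simply slack that the authors never tightened.
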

\begin{proof}
   The assertion of the Lemma is clearly true when $N(n) \geq \frac {N(m)}{64}$. We may therefore assume that $N(n) \leq \frac {N(m)}{64}$. Writing $m=a+bi, n=c+di$ with $a,b, c,d \in \mz$, we see that $N(n) \leq \frac {N(m)}{64}$ is equivalent to
\begin{align*}
 \frac {a^2+b^2}{64} \geq c^2+d^2.
\end{align*}
   We deduce from this that
\begin{align}
\label{maxcd}
 \max \{ |c|, |d| \} \leq \frac {\sqrt{a^2+b^2}}{8}.
\end{align}
   Writing \eqref{normineq} in terms of $a,b,c,d$, we find that it suffices to show
\begin{align}
\label{3.6}
  a^2+2ac+b^2+2bd \geq \frac {a^2+b^2}{64}.
\end{align}
   Applying \eqref{maxcd}, we see that
\begin{align*}
  a^2+2ac+b^2+2bd \geq  a^2+b^2-(|a|+|b|)\frac {\sqrt{a^2+b^2}}{4}.
\end{align*}
   As the above inequality implies inequality \eqref{3.6}, the assertion of the lemma now follows.
\end{proof}

  Applying Lemma \ref{normineq} to the case $n=td+1$ with $t \neq 0$, we see that in this case $N(td+1) \geq N(td)/64$. In view of the rapid decay of $V$ in \eqref{2.07}, we may further assume that $N(n) \leq x^{1+\varepsilon}$ for any $\varepsilon>0$. This implies that $N(td) \leq 64x^{1+\varepsilon}$. We then deduce that the terms with $t \neq 0$ in $S_{1,1}$ contribute an amount that is
\begin{align*}
    \ll \sum_{\substack{d|q \\ d \equiv 1 \bmod (1+i)^3}}  \mu^2_{[i]}(q/d) \varphi(d) \sum_{\substack{0 \neq N(td) \leq 64x^{1+\varepsilon}}}\frac{1}{\sqrt{N(td)}} \ll 2^{\omega(q)}x^{1/2+\varepsilon}.
\end{align*}

Thus, we have
\begin{align*}
  S_{1,1} = \frac 12 \psi^*(q)+O\left( N(q)x^{-1/2+\varepsilon}+2^{\omega(q)}x^{1/2+\varepsilon} \right).
\end{align*}

  Now, to estimate $S_{1,2}$, we write $n=td-1$  with $t \in \mathcal{O}_K$. Note that in this case $t \neq 0$ since $-1$ is not primitive.
The treatment of the contribution from these $t \neq 0$ terms is similar to that of $S_{1,1}$ and we arrive at
\begin{align*}
    S_{1,2} \ll 2^{\omega(q)}x^{1/2+\varepsilon}.
\end{align*}

   We then conclude that
\begin{align*}
  \sumstar_{\substack{\chiq \\ \chi(-1)=-1}} L\left( \frac{1}{2} , \widetilde{\chi} \right) =\frac 12 \psi^*(q)+O\left( N(q)x^{-1/2+\varepsilon}+2^{\omega(q)}x^{1/2+\varepsilon}+\frac {N(q)^{1+\varepsilon}}{x^{1/2}}2^{\omega(q)} \right).
\end{align*}

   By setting $x=N(q)$, we obtain \eqref{firstmoment}.

\subsection{The main term of the second moment}

   To establish \eqref{secondmoment}, we note that it is shown in Section \ref{AFE} that $|L(1/2, \widetilde{\chi})|^2=2A(\widetilde{\chi})$ with $A(\widetilde{\chi})$ given in \eqref{achi}. Again writing any integral non-zero ideal $\mathcal{A}$ co-prime to $2$ in $\mathcal{O}_K$ in term of its unique primary generator $a$ and applying Lemma \ref{ortho}, we have
\begin{align*}
& \sumstar_{\substack{\chiq \\ \chi(-1)=-1}} \left| L(1/2, \widetilde{\chi} ) \right|^2 = 2 \sum_{\substack{ n,m \\ n, m \text{ primary}}}
\frac{1}{\sqrt{N(n)N(m)}}W \left( \frac {N(nm)}{N(q)} \right) \sumstar_{\substack{\chiq \\ \chi(-1)=-1}}\widetilde{\chi}(n)\overline{\widetilde{\chi}}(m) =S_{2,1}-S_{2,2},
\end{align*}
  where
\begin{align*}
S_{2,1} &= \sum_{\substack {d|q \\ d \text{ primary}}} \mu_{[i]}(d) \varphi(q/d) \sum_{\substack{n\equiv m
\bmod q/d\\n, m \text{ primary} \\(mn,q)=1}}\frac{1}{\sqrt{N(n)N(m)}}W \left( \frac {N(nm)}{N(q)} \right), \\
S_{2,2} &=\sum_{\substack {d|q \\ d \text{ primary}}} \mu_{[i]}(d) \varphi(q/d) \sum_{\substack{n\equiv -m
\bmod q/d\\n, m \text{ primary} \\(mn,q)=1}}\frac{1}{\sqrt{N(n)N(m)}}W \left( \frac {N(nm)}{N(q)} \right). \nonumber
\end{align*}

   We consider the terms $n=m$ in $S_{2,1}$.  These terms contribute
\begin{align*}
 M_2 = \sum_{\substack {d|q \\ d \text{ primary}}} \mu_{[i]}(d) \varphi(q/d) \sum_{\substack{n \text{ primary}\\(n,q)=1}}\frac{1}{N(n)}W \left( \frac {N(n)^2}{N(q)} \right).
\end{align*}

   We then apply Mellin inversion to get
\begin{align}
\label{M2int}
\begin{split}
  & \sum_{\substack{n \text{ primary}\\(n,q)=1}}\frac{1}{N(n)}W \left( \frac {N(n)^2}{N(q)} \right)  = \frac 1{2\pi i } \int\limits_{(2)} \sum_{\substack{n \text{ primary}\\(n,q)=1}} \frac{1}{N(n)^{1+2s}} N(q)^s  \widehat{W}(s) \dif s \\
 &= \frac 1{2\pi i } \int\limits_{(2)} \zeta_{K}(1+2s) \left ( \prod_{\mathfrak{p} | 2q} \left( 1 - N(\mathfrak{p})^{-(1+2s)} \right) \right )  N(q)^s  \widehat{W}(s) \dif s.
\end{split}
\end{align}
Here and in what follows, we use $\zeta_K(s)$ to denote the Dedekind zeta function of $K$ and $\mathfrak{p}$ to denote prime ideals in $\mathcal{O}_K$.  Moreover, $ \widehat{W}(s)$ is the Mellin transform of $W(t)$, so that
\begin{align*}
      \widehat{W}(s) =\int\limits^{\infty}_0W(t)t^s\frac {\dif t}{t}.
\end{align*}

     Using \eqref{w} and integration by parts implies that for $\Re(s)>0$,
\begin{align}
\label{Wanlext}
      \widehat{W}(s) =\frac 1{s}I(s), \quad I(s)= \int\limits^{\infty}_0W'(t)t^s \dif t.
\end{align}

     Note that \eqref{w} further implies that $I(0)=1$ and integration by parts implies that $I(s)$ is clearly analytic for $\Re(s)>-1$ and satisfies
\begin{align*}
      I(s) \ll \frac{1}{|1+s|}.
\end{align*}

     It follows that \eqref{Wanlext} gives an analytic extension of $\widehat{W}(s)$ to $\Re(s) >-1$ with a
simple pole at $s = 0$ with residue $1$ such that when $\Re(s) >-1$,
\begin{equation}
\label{eq:h}
 \widehat{W}(s) \ll \frac{1}{|s||1+s|}.
\end{equation}

   We now shift the line of integration in \eqref{M2int} to $\Re(s)=-1/4+\varepsilon$ and we encounter a double pole at $s=0$. The residue is easily seen (by taking note that the residue of $\zeta_K(s)$ at $s = 1$ is $\pi/4$) to be
\begin{align}
\label{M2main}
   \frac {\pi}{16} \frac{\varphi(q)}{N(q)}  \log N(q)+\frac {\pi}{8} \frac{\varphi(q)}{N(q)}\sum_{\substack{\mathfrak{p} |2q}}  \frac {\log N(\mathfrak{p})}{N(\mathfrak{p})-1}  +\frac{\varphi(q)}{N(q)}C_0,
\end{align}
  where $C_0$ is an explicitly computable positive constant.

   Since $\displaystyle \sum_{\substack{\mathfrak{p} |q }} \frac {\log N(\mathfrak{p})}{N(\mathfrak{p})-1}$
is the largest when $q$ is of the form $\displaystyle \prod_{N(\varpi) \leq y}\varpi$ for primes $\varpi$, it follows from this and the prime ideal theorem \cite[Theorem 8.9]{MVa1} that
\begin{align}
\label{primebound}
 \sum_{\substack{\mathfrak{p} |q }} \frac {\log N(\mathfrak{p})}{N(\mathfrak{p})-1} \ll 1+\log \omega(q).
\end{align}

   To estimate the remaining integral at $\Re(s)=-1/4+\varepsilon$, we shall use the convexity bound that (see \cite[Exercise 3, p. 100]{iwakow}) for $\Re(s) =-1/4+\varepsilon$,
\begin{align*}
  \zeta_K(1+2s) \ll \left( 1+|s|^2 \right)^{1/4+\varepsilon}.
\end{align*}

   Applying this and \eqref{eq:h} gives that the integral on the line $\Re(s) = -1/4+\varepsilon$ is $\ll N(q)^{-1/4+\varepsilon}$.
From this and \eqref{M2main}, we get
\begin{align*}
 M_2 =&  \left ( \frac {\pi}{16} \frac{\varphi(q)}{N(q)}  \log N(q)+\frac {\pi}{8} \frac{\varphi(q)}{N(q)}\sum_{\substack{\mathfrak{p} |2q}}  \frac {\log N(\mathfrak{p})}{N(\mathfrak{p})-1}  +\frac{\varphi(q)}{N(q)}C_0 \right ) \sum_{\substack {d|q \\ d \text{ primary}}} \mu_{[i]}(d) \varphi(q/d)  \\ &\hspace*{2cm} +O\left(N(q)^{-1/4+\varepsilon} \sum_{d|q} \mu^2_{[i]}(d) \varphi(q/d) \right).
\end{align*}

  We then deduce using \eqref{phi} and \eqref{primebound} that
\begin{align}
\label{maintermsec}
 M_2 = \left ( \frac {\pi}{16} \frac{\varphi(q)}{N(q)}  \log N(q)+\frac {\pi}{8} \frac{\varphi(q)}{N(q)}\sum_{\substack{\mathfrak{p} |2q}}  \frac {\log N(\mathfrak{p})}{N(\mathfrak{p})-1}  +\frac{\varphi(q)}{N(q)}C_0 \right )\psi^{*}(q)+O\Big(N(q)^{3/4+\varepsilon} \Big ).
\end{align}

\subsection{The error term of the second moment}

   We first note that the terms $n=m$ in $S_{2,2}$ can occur if and only if $2n \equiv 0 \pmod {q/d}$. As $(q, 2)=1$, this occurs if and only if $q/d | n$. It follows readily from this that the terms $n=m$ in $S_{2,2}$ contribute
\begin{align}
\label{error1}
  \ll 2^{\omega(q)}\log N(q).
\end{align}

   To treat the contributions from the terms $n \neq m$ in $S_{2,1}$ and $S_{2,2}$, we note the following

\begin{lemma}\label{lemmaofStefanicki}
We have for any $\varepsilon>0$,
\begin{align}
\label{offdiag}
\sum_{\substack{n\neq m\\ n\equiv m \bmod \ell\\
(nm,q)=1}}
\frac{1}{\sqrt{N(n)N(m)}}W\left( \frac {N(nm)}{N(q)} \right) \ll
\frac{N(q)^{1/2+\epsilon}}{N(l)}.
\end{align}
\end{lemma}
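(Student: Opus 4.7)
\textbf{Proof plan for Lemma \ref{lemmaofStefanicki}.}

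The plan is to parametrize the condition $n \equiv m \pmod \ell$, exploit Lemma \ref{arithmeticprog} to force $t$ to lie in a small box, and then sum geometrically. Since $\ell \mid q$ with $(q,2)=1$, we have $(\ell,1+i)=1$; together with $n,m$ being primary (hence both $\equiv 1 \pmod{(1+i)^3}$), the congruence $n \equiv m \pmod \ell$ and the condition $n\neq m$ let us write $n = m + t\ell(1+i)^3$ for a unique $t \in \mathcal{O}_K\setminus\{0\}$. The summation is symmetric in $n$ and $m$, so at the cost of a factor of $2$ I will restrict to pairs with $N(n)\geq N(m)$.

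The first step is to apply Lemma \ref{arithmeticprog} with $m\leftarrow t\ell(1+i)^3$ and $n\leftarrow m$: the hypothesis $N(m + t\ell(1+i)^3) = N(n) \geq N(m)$ delivers
\[
N(n) \;\geq\; \frac{N(t\ell(1+i)^3)}{64} \;=\; \frac{N(t)N(\ell)}{8},
\]
which yields the crucial bound $1/\sqrt{N(n)} \ll 1/\sqrt{N(t)N(\ell)}$. Next, I will use the rapid decay of $W$ recorded in \eqref{w} to truncate the summation to $N(nm)\leq N(q)^{1+\varepsilon}$, with the tail contributing $O(N(q)^{-A})$ for any $A$. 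Combined with $N(n)\geq N(m)$ this also forces $N(m) \leq N(q)^{1/2+\varepsilon/2}$, and combined with $N(n)\geq N(t)N(\ell)/8$ it forces
\[
1 \leq N(t) \leq T := \frac{8\,N(q)^{1+\varepsilon}}{N(m)N(\ell)}.
\]

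Now I carry out the summation. For fixed primary $m$ with $N(m) \leq N(q)^{1/2+\varepsilon/2}$, partial summation together with the Gauss circle estimate $\#\{t\in\mathcal{O}_K:N(t)\leq X\}=\pi X+O(X^{\theta})$ gives $\sum_{0<N(t)\leq T} N(t)^{-1/2}\ll T^{1/2}$. Dropping the coprimality condition $(nm,q)=1$ (which only shrinks the sum) and inserting the bound for $1/\sqrt{N(n)}$, the inner sum over $t$ is
\[
\ll \frac{1}{\sqrt{N(m)N(\ell)}}\sum_{0<N(t)\leq T}\frac{1}{\sqrt{N(t)}} \ll \frac{T^{1/2}}{\sqrt{N(m)N(\ell)}} \ll \frac{N(q)^{1/2+\varepsilon/2}}{N(m)N(\ell)}.
\]
Summing over primary $m$ with $N(m)\leq N(q)^{1/2+\varepsilon/2}$ and using the crude bound $\sum_{N(m)\leq X,\,m\text{ primary}}1/N(m)\ll \log X$ (a weaker form of Lemma \ref{harmonicsum}) produces
\[
\ll \frac{N(q)^{1/2+\varepsilon/2}}{N(\ell)}\log N(q),
\]
which is the claimed estimate after adjusting $\varepsilon$.

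The only genuine subtlety is the application of Lemma \ref{arithmeticprog}: one needs the clean lower bound $N(n)\geq cN(t)N(\ell)$ to beat the additive shift introduced by $m$, and without it the naive dyadic decomposition only delivers the weaker bound $N(q)^{1/2+\varepsilon}$ (not divided by $N(\ell)$) whenever $N(\ell)$ is moderately large. The symmetry step plus Lemma \ref{arithmeticprog} is precisely what converts the Gaussian-integer problem into an essentially one-dimensional summation and lets the factor of $N(\ell)$ appear in the denominator.
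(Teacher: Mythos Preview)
Your argument is correct and follows essentially the same route as the paper: use symmetry to assume one variable dominates, truncate via the decay of $W$ in \eqref{w}, parametrize the congruence, invoke Lemma~\ref{arithmeticprog} to convert $1/\sqrt{N(\text{larger})}$ into $1/\sqrt{N(t)N(\ell)}$, sum the resulting $t$-sum by the Gauss circle count, and finish with $\sum 1/N(m)\ll\log N(q)$. The only cosmetic difference is that you exploit primarity to write $n=m+t\ell(1+i)^3$, whereas the paper simply writes $m=n+k\ell$; this gains an irrelevant factor of $8$ and tacitly imports the assumption that $n,m$ are primary, which is not part of the lemma's hypotheses as stated (though it holds in the only place the lemma is used). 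Dropping the $(1+i)^3$ would make your proof match the paper's verbatim.
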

\begin{proof}
   We may assume that $N(m) \geq N(n)$. In view of the rapid decey of $W$ shown in \eqref{w}, we may further assume that $N(nm) \leq N(q)^{1+\varepsilon}$ for any $\varepsilon>0$. We then have
\begin{align}
\label{3.7}
 \sum_{\substack{n\neq m\\ n\equiv m \bmod \ell\\
(nm,q)=1}}
\frac{1}{\sqrt{N(n)N(m)}}W\left( \frac {N(nm)}{N(q)} \right) \ll \sum_{\substack{N(n) \leq N(q)^{1+\varepsilon}}}\frac{1}{\sqrt{N(n)}}\sum_{\substack{ m \neq n\\ m \equiv n \bmod \ell\\
N(n) \leq N(m) \leq  N(q)^{1+\varepsilon}/N(n)}}
\frac{1}{\sqrt{N(m)}}.
\end{align}

   We write $m=n+kl$ with $k \in \mathcal{O}_K$ and we apply Lemma \ref{arithmeticprog} to see that $N(kl) \leq 64N(m) \leq 64N(q)^{1+\varepsilon}/N(n)$. Thus, we have
\begin{align*}
 \sum_{\substack{ m \neq n\\ m \equiv n \bmod \ell\\
N(n) \leq N(m) \leq  N(q)^{1+\varepsilon}/N(n)}}
\frac{1}{\sqrt{N(m)}} \ll \frac{1}{\sqrt{N(l)}}\sum_{\substack{ 0 \neq N(k) \leq  64N(q)^{1+\varepsilon}/N(n)}}
\frac{1}{\sqrt{N(kl)}} \ll \frac{1}{N(l)\sqrt{N(n)}}N(q)^{1/2+\varepsilon}.
\end{align*}

   Applying this in \eqref{3.7}, we readily deduce \eqref{offdiag} and this completes the proof of the lemma.
\end{proof}

   It follows from Lemma \ref{lemmaofStefanicki} that the terms $n \neq m$ contribute in $S_{2,1}, S_{2,2}$
\begin{align} \label{error2}
  \ll 2^{\omega(q)}N(q)^{1/2+\varepsilon}.
\end{align}

Using \eqref{omega} and combining \eqref{maintermsec}, \eqref{error1} and \eqref{error2}, the proof of \eqref{secondmoment} is complete.

\section{Proof of Theorem ~\ref{onelevelunitary}}

   Applying Lemma \ref{lem2.4}, we see that it suffices to show that for any $\hat{\phi}$ supported in $(-2 + \varepsilon, 2-\varepsilon)$ with any $0 < \varepsilon < 1$,
\begin{align}
\label{4.1}
  \lim_{N(q) \rightarrow \infty} \frac{\widetilde{S}(q,\hat{\phi})}{N(q) \log N(q)}=0,
\end{align}
   where
\begin{align*}
   \widetilde{S}(q,\hat{\phi})=\sumstar_{\substack{\chiq \\ \chi(-1)=-1}}\sum_{\substack{n \text{ primary}}}\frac {\Lambda_K(n)}{\sqrt{N(n)}}\hat{\phi}\left( \frac {\log N(n)}{\log N(q)} \right)\left ( \widetilde{\chi}(n)  +\overline{\widetilde{\chi}}(n)\right ).
\end{align*}

   Applying Lemma \ref{ortho}, we see that
\begin{align*}
   \widetilde{S}(q,\hat{\phi}) =& \sum_{\substack{d|q \\ d \equiv 1 \bmod (1+i)^3}}\mu_{[i]}(q/d) \varphi(d)\sum_{\substack{n \text{ primary} \\ n \equiv 1 \bmod d }}\frac {\Lambda_K(n)}{\sqrt{N(n)}}\hat{\phi}\left( \frac {\log N(n)}{\log q} \right) \\
& \hspace*{2cm}  -\sum_{\substack{d|q \\ d \equiv 1 \bmod (1+i)^3}} \mu_{[i]}(q/d) \varphi(d)\sum_{\substack{n \text{ primary} \\  n \equiv -1 \bmod d }}\frac {\Lambda_K(n)}{\sqrt{N(n)}}\hat{\phi}\left( \frac {\log N(n)}{\log q} \right).
\end{align*}

   Similar to the treatment of the case $n \neq 1$ in $S_{1,1}$ in Section \ref{sec3.1}, we have
\begin{align*}
   \sum_{\substack{n \text{ primary} \\  n \equiv \pm 1 \bmod d }}\frac {\Lambda_K(n)}{\sqrt{N(n)}}\hat{\phi}\left( \frac {\log N(n)}{\log q} \right) & \ll \sum_{\substack{n \text{ primary} \\ 1 < N(n) \leq q^{2-\epsilon}\\ n \equiv \pm 1 \bmod d  }}\frac {\log N(q)}{\sqrt{N(n)}} \ll
\frac {N(q)^{1-\varepsilon/2}\log N(q)}{N(d)}.
\end{align*}

   It follows that
\begin{align*}
   \widetilde{S}(q,\hat{\phi}) \ll  \sum_{\substack{d|q \\ d \equiv 1 \bmod (1+i)^3}} \mu^2_{[i]}(q/d) \varphi(d)\frac {N(q)^{1-\varepsilon/2}\log N(q)}{N(d)} \ll 2^{\omega(q)}N(q)^{1-\varepsilon/2}\log N(q).
\end{align*}

   In view of \eqref{omega}, the desired limit in \eqref{4.1} follows from the above estimation and this completes the proof of Theorem \ref{onelevelunitary}. \newline

\noindent{\bf Acknowledgments.} P. G. is supported in part by NSFC grant 11371043 and 11871082 and L. Z. by the FRG grant PS43707 and the Faculty Silverstar Award PS49334.

\bibliography{biblio}
\bibliographystyle{amsxport}

\vspace*{.5cm}

\noindent\begin{tabular}{p{8cm}p{8cm}}
School of Mathematical Sciences & School of Mathematics and Statistics \\
Beihang University & University of New South Wales \\
Beijing 100191 China & Sydney NSW 2052 Australia \\
Email: {\tt penggao@buaa.edu.cn} & Email: {\tt l.zhao@unsw.edu.au} \\
\end{tabular}

\end{document}